\documentclass[12pt,a4paper,oneside]{amsart}

\usepackage{amsfonts, amsmath, amssymb, amsthm, amscd}
\usepackage{hyperref}
\hypersetup{
  colorlinks   = true, 
  urlcolor     = blue, 
  linkcolor    = blue, 
  citecolor   = red 
}
\usepackage{anysize}

\newtheorem{theorem}{Theorem}[section]

\theoremstyle{definition}
\newtheorem{definition}[theorem]{Definition}
\newtheorem{property}[theorem]{Property}

\theoremstyle{remark}
\newtheorem{remark}[theorem]{Remark}

\numberwithin{equation}{section}

\newcommand{\cyc}{\mathit{cl}}

\renewcommand{\int}{\mathop{\rm int}}

\renewcommand{\epsilon}{\varepsilon}

\begin{document}

\title{Dependence of the heavily covered point on parameters}

\author{Alexey~Balitskiy{$^{\spadesuit}$}}

\email{alexey\_m39@mail.ru}

\author{Roman~Karasev{$^{\clubsuit}$}}

\email{r\_n\_karasev@mail.ru}
\urladdr{http://www.rkarasev.ru/en/}

\address{{$^{\spadesuit}$}{$^{\clubsuit}$}Moscow Institute of Physics and Technology, Institutskiy per. 9, Dolgoprudny, Russia 141700}
\address{{$^{\spadesuit}$}{$^{\clubsuit}$}Institute for Information Transmission Problems RAS, Bolshoy Karetny per. 19, Moscow, Russia 127994}

\thanks{{$^\clubsuit$} Supported by the Dynasty foundation.}
\thanks{{$^\spadesuit$}{$^\clubsuit$} Supported by the Russian Foundation for Basic Research grant 15-31-20403 (mol\_a\_ved).}
\thanks{{$^\spadesuit$}{$^\clubsuit$} Supported by the Russian Foundation for Basic Research grant 15-01-99563 A}

\subjclass[2010]{52C35,52C45,60D05}
\keywords{simplicial depth, Gromov's method}

\begin{abstract}
We examine Gromov's method of selecting a point ``heavily covered'' by simplices formed by a given finite point sets, in order to understand the dependence of the heavily covered point on parameters. We have no continuous dependence, but manage to utilize the ``homological continuous dependence'' of the heavily covered point. This allows us to infer some corollaries in a usual way. We also give an elementary argument to prove the simplest of these corollaries.
\end{abstract}

\maketitle

\section{Introduction}

Boros, F\"uredi, and B\'ar\'any in~\cite{bf1984,ba1982} established a theorem on existence of a point ``heavily covered'' by simplices: For a prescribed $N$-point set $X$ in $\mathbb R^d$, it is possible to find a point $c\in\mathbb R^d$ that is covered by an essential fraction $c_d>0$ of all the $\binom{N}{d+1}$ simplices that can be chosen with vertices in $X$. In another paper~\cite{pach1998} Pach established a similar result in a ``colorful'' version, for selecting the vertices $\{x_0,\ldots, x_d\}$ of the simplices in all possible ways from their respective sets $X_0, \ldots, X_d$ and finding a point that is covered by a certain fraction of all those simplices.

The next important step was made recently, in~\cite{grom2010} Gromov has developed a certain topological approach based on the ideas of ``bounded cohomology'', and proved the following generalization of the ``heavily covered point'' results:

\begin{theorem}[Gromov, 2010]
\label{theorem:gromov}
For any continuous map $f$ from the $(N-1)$-dimensional simplex $\Delta$ to the Euclidean space $\mathbb R^d$, some point $c\in \mathbb R^d$ is covered by a fraction of at least $\frac{2d}{(d+1)!(d+1)}$ of all images of $d$-dimensional faces of $\Delta$.
\end{theorem}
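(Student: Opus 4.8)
The plan is to reproduce Gromov's cohomological argument, so let me separate the (easy) reductions from the one genuinely hard ingredient. Since the conclusion is stable under uniform limits of $f$ — approximate $f$ by generic piecewise-linear maps $f_n\to f$, prove the bound for each $f_n$, and extract from the resulting points $c_n$ a limit point $c$ using compactness of $\bigcup_n f_n(\Delta)$ — I may assume $f$ is piecewise linear and generic. Call $c\in\mathbb R^d$ \emph{generic} if it lies outside a suitable null set, in particular outside the image $f(\Delta^{(d-1)})$ of the $(d-1)$-skeleton and is a regular value of $f$ restricted to every face; then for each $d$-face $\sigma$ the fibre $(f|_\sigma)^{-1}(c)$ is a finite subset of the interior of $\sigma$. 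Define $\phi_c\in C^d(\Delta;\mathbb Z_2)$ by $\phi_c(\sigma)=\#(f|_\sigma)^{-1}(c)\bmod 2$. Because $\phi_c(\sigma)\ne 0$ forces $c\in f(\sigma)$, the number of $d$-faces covering $c$ is at least the number $\|\phi_c\|$ of $d$-faces on which $\phi_c$ is nonzero, so it suffices to find one generic $c$ with $\|\phi_c\|\ge \frac{2d}{(d+1)!(d+1)}\binom N{d+1}$.

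Next I would record how the cochains $\phi_c$ fit together. First, $\phi_c$ is a cocycle: for a $(d+1)$-face $\tau$ the fibre $(f|_\tau)^{-1}(c)$ is a compact $1$-manifold whose boundary is $(f|_{\partial\tau})^{-1}(c)=\bigsqcup_{\sigma\subset\tau}(f|_\sigma)^{-1}(c)$, and a compact $1$-manifold has evenly many boundary points, which is exactly $(\delta\phi_c)(\tau)=0$; since $\Delta$ is contractible, $H^d(\Delta;\mathbb Z_2)=0$ and hence $\phi_c=\delta\psi_c$ for some $(d-1)$-cochain $\psi_c$. Second, if $c$ and $c'$ lie on opposite sides of the wall $f(\rho)$ of a single $(d-1)$-face $\rho$, the only fibres that change — each by one point — are those of the $d$-faces containing $\rho$, so $\phi_c-\phi_{c'}=\delta(\rho^*)$, the coboundary of the indicator cochain of $\rho$. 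Dragging $c$ along a generic arc out to a point not in the image of $f$ (where $\phi$ vanishes) then exhibits a primitive $\psi_c$ as a sum of indicators of the $(d-1)$-faces whose walls the arc crosses; so $\|\phi_c\|$ measures how tightly the region of $c$ in $\mathbb R^d\setminus f(\Delta^{(d-1)})$ is ``wrapped'' by $f$.

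The remaining step, which I expect to be the real obstacle, is a cofilling (isoperimetric) inequality for the simplex guaranteeing that $\|\phi_c\|$ is large for a good choice of $c$. One picks $c$ near $f(x)$ for an interior point $x$ of $\Delta$, arranges that any primitive $\psi_c$ of $\phi_c$ must ``separate $x$ from infinity'' in the appropriate homological sense, and then proves that \emph{every} $(d-1)$-cochain on $\Delta$ whose coboundary realizes such a separation has support of size at least $\frac{2d}{(d+1)!(d+1)}\binom N{d+1}$. This is Gromov's bounded-cohomology estimate; I would prove it by induction on $d$, at each stage invoking the sharp filling constant of the boundary sphere $\partial\Delta^{k}$ of the relevant sub-simplex (any $\mathbb Z_2$-cycle there bounds a chain of controlled size, by coning off a vertex chosen by an averaging argument), and the explicit value $\frac{2d}{(d+1)!(d+1)}$ is exactly the constant produced by that recursion. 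Granting the inequality, the reductions of the first two paragraphs finish the argument, and the compactness step transfers the conclusion to arbitrary continuous $f$.
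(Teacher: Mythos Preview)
Your first two paragraphs are exactly Gromov's setup as the paper presents it: your $\phi_c$ is the $d$-cocycle the paper assigns to $c$ via the ``inverse map'' $f^c:S^d\to\mathcal C^d$, and your wall-crossing identity $\phi_c-\phi_{c'}=\delta(\rho^*)$ is the statement that $f^c$ is a chain map $C_*(S^d;\mathbb F_2)\to C^{d-*}(\Delta;\mathbb F_2)$. The gap is in the third paragraph, where the actual lower bound is supposed to come from.

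The inequality you write down is not the right one. You assert that every primitive $\psi_c$ of $\phi_c$ has support at least $\frac{2d}{(d+1)!(d+1)}\binom N{d+1}$; but $\psi_c$ is a $(d-1)$-cochain, so its support lives among the $\binom N d$ faces of dimension $d-1$, and in any case a lower bound on $\|\psi_c\|$ says nothing about $\|\delta\psi_c\|=\|\phi_c\|$, which is the quantity you need large. Gromov's cofilling (``linear filling profile'') goes in the opposite direction: it bounds the \emph{minimal} primitive from \emph{above} in terms of the cocycle.

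More importantly, the argument cannot be localized to a single well-chosen $c$ ``near $f(x)$''. Nothing about one point forces $\|\phi_c\|$ to be large; the obstruction is global. In the paper's language: the whole assignment $c\mapsto\phi_c$ is a map $f^c:S^d\to\mathcal C^d$, and the canonical class $\xi$ evaluates to $1$ on $f^c_*[S^d]$ because $[S^d]$ is sent to the nonzero element of $H^0(\Delta;\mathbb F_2)$. The cofilling inequalities are then used contrapositively: if \emph{every} $\phi_c$ were thin, the iterated fillings would extend $f^c$ over the ball $D^{d+1}$ (equivalently, contract the cycle inside the thin subspace $\mathcal C^d_\varepsilon$), contradicting $\xi\neq 0$. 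The constant $\frac{2d}{(d+1)!(d+1)}$ is precisely the threshold below which this contraction goes through. Your phrase ``separate $x$ from infinity'' gestures toward this nontriviality, but it is a property of the entire $d$-cycle of cocycles $\{\phi_c\}_{c\in S^d}$, not a condition one can impose on a single $(d-1)$-cochain $\psi_c$ and then bound combinatorially. Without invoking the full sphere and the class $\xi$, the induction you sketch has no base to stand on.
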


\begin{remark}
In fact, the fraction was $\frac{2d}{(d+1)!(d+1)} - O(1/N)$, but we ignore the correcting term throughout.
\end{remark}

The classical case follows from this theorem when $f$ is chosen to be the linear map taking the vertices of $\Delta$ to the corresponding points of $X$. Gromov's approach allowed not only to generalize the result, but also to improve the known bounds for the fraction $c_d$ to $\frac{2d}{(d+1)!(d+1)}$. Soon after further improvements were made using Gromov's method, see~\cite{matwag2011,kms2012}, for example.

One thing that could facilitate further applications and generalizations of Theorem~\ref{theorem:gromov} would be the continuous dependence of the point $c$ (somehow selected from the heavily covered points) on the map $f$. This is the question that we consider in this note.

In fact, it is not likely to expect any continuous behavior, but instead we want to show the ``homologically continuous'' behavior. This behavior is very typical in those cases, when the solution of the problem is guaranteed (co)homologically. Namely, we are going to present a construction of an odd \emph{$0$-cycle modulo $2$} in $\mathbb R^d$, that is an odd number of points in $\mathbb R^d$, any of which is heavily covered and such that this $0$-cycle modulo $2$ depends on $f$ continuously.

The continuous dependence of a $0$-cycle modulo $2$ is informally understood in the following way (see also~\cite{akv2012}): Either the points of the cycle move continuously with the change of parameters, or a pair (or several pairs) of them annihilate at the same position, or some pair gets born from nothing at some position.

A more geometric way is to treat an odd $0$-cycle modulo $2$ in $\mathbb R^d$ continuously parameterized by a manifold $P$ to be a subset $C\subset P\times \mathbb R^d$ that is an embedded modulo 2 pseudomanifold of codimension $d$ such that its projection onto $P$ is a proper map of degree $1$ modulo $2$. This kind of continuous dependence is sufficient to apply common methods of generalizing the result by taking a Cartesian product with some other geometric construction. Here a \emph{modulo 2 pseudomanifold of dimension $d$} is a $d$-dimensional simplicial complex such that every its $(d-1)$-dimensional face is contained in precisely two $d$-dimensional faces.

Certain consequences of this ``homological continuity'' are given in Section~\ref{section:applications} and Section~\ref{section:elementary} contains more elementary observations in the planar case using some simple topology.

\medskip
\textbf{Acknowledgments. }
We thank Benjamin Matschke for useful remarks and stimulating questions, and Sergey Melikhov for pointing out the useful review~\cite{fried2008}.

\section{Proof of homological continuity}
\label{section:proof}

Before proving the homologically continuous dependence of $c$ on $f$ we recall Gromov's proof. Though there are simpler proofs for Theorem~\ref{theorem:gromov}, like in~\cite{kar2011}, we need the original Gromov's argument here.

\subsection{Space of cocycles}
The key idea of Gromov is considering the space of $d$-dimensional cocycles modulo $2$ in $\Delta$, denoted by $\cyc^d(\Delta; \mathbb F_2)$. This space is constructed in~\cite{grom2010} combinatorially in the following manner: Let $X$ be any topological space, take its cochain complex (let us work with modulo $2$ coefficients and omit $\mathbb F_2$ from the notation)
$$
\begin{CD}
0 @>>> C^0(X) @>{d}>> C^1(X) @>>> \dots @>>> C^d(X) @>>> \dots.
\end{CD}
$$
Then we build a certain simplicial abelian group out of this. Its $k$-simplices are the chain maps (reversing the degree): $\sigma : C_*(\Delta^k) \to C^{d-*}(X)$. These objects constitute a functor from the simplex category to abelian groups (maps) and therefore define a simplicial abelian group, which is natural to call the space of $d$-cocycles of $X$, $\cyc^d(X; \mathbb F_2)$. Speaking informally, the points of this space are $d$-cocycles themselves, the edges between the points correspond to the ``homologous'' relation, and the higher-dimensional faces are attached to them according to the chain complex.

Since $\cyc^d(X; \mathbb F_2)$ is naturally a simplicial abelian group, it is a Kan/fibrant complex (see the nice introductory text~\cite{fried2008} or the textbook~\cite{weibel1994}) and its homotopy groups can be calculated formally as $\pi_k(\cyc^d(X; \mathbb F_2)) = H^{d-k}(X; \mathbb F_2)$ (meaning that the simplicial maps of a simplex $\Delta^k$ to $\cyc^d(X; \mathbb F_2)$ up to homotopy coincide with the cohomology group of $X$), this is what Gromov~\cite{grom2010} calls the Dold--Thom--Almgren theorem. It might be not very clear why it makes sense to call this object a ``space of $d$-cocycles'', but intuitively it is very useful to consider it as such, in particular the Dold--Thom theorem for the other version of the cycle space built from currents holds in the same manner, as was shown by Almgren~\cite{alm1962}. Actually, we are also going to consider such ``spaces of cycles'' built from the chain complex of a simplicial complex $X$.

In fact, the above mentioned properties of $\cyc^d(X; \mathbb F_2)$ are consequences of the following:

\begin{property}
\label{property:chain-map}
For a simplicial complex $W$ a simplicial map $W\to \cyc^d(X; \mathbb F_2)$ (in the category of simplicial sets) is the same as a chain map $C_*(W; \mathbb F_2) \to C^{d-*}(X; \mathbb F_2)$.
\end{property}

For any connected $X$, in the $d$-dimensional cohomology of $\cyc^d(X; \mathbb F_2)$, $H^d(\cyc^d(X; \mathbb F_2); \mathbb F_2)$, there exists a certain canonical class $\xi$. Its existence can be guessed from the ``Dold--Thom--Almgren theorem'', or it can be built explicitly by counting how many times a vertex (as a $0$-cocycle) of $X$ participates in a $d$-cycle of cocycles from $\cyc^d(X; \mathbb F_2)$.

More precisely, using Property~\ref{property:chain-map} we view a $d$-cycle of cocycles as a simplicial map $c : P\to \cyc^d(X; \mathbb F_2)$, where $P$ is a modulo $2$ pseudomanifold of dimension $d$, or equivalently, a chain map $C_*(P;\mathbb F_2) \to C^{d-*}(X; \mathbb F_2)$.

\begin{definition}
\label{definition:can-class}
The value of $\xi$ on the cycle of $P$ is the image of the $d$-dimensional fundamental class $[P]$ in $H^0(X; \mathbb F_2) = \mathbb F_2$.
\end{definition}

\subsection{Start of Gromov's proof: The ``inverse map''}

Return to our particular space $X = \Delta^N$, or just $\Delta$, and build the space of cocycles $\mathcal C^d = \cyc^d(\Delta; \mathbb F_2)$ from the chain complex of $\Delta$, viewed as a simplicial complex.

Now, in our problem, we compactify the target space $\mathbb R^d$ with the point at infinity (denoted by $\infty$) to obtain the sphere $S^d$. So $f : \Delta\to \mathbb R^d$ becomes a map to $S^d$ not touching $\infty$. Any point $y\in S^d$ defines a $d$-cocycle on $\Delta$ by counting the multiplicity (modulo $2$) on the $f$-image of every $d$-face of $\Delta$ at $y$. In a certain sense this cocycle has continuous dependence on $y$ and therefore there arises a continuous map $f^c : S^d \to \mathcal C^d$, mapping the point $\infty$ to the zero $d$-cocycle.

More formally, using Property~\ref{property:chain-map}, to describe $f^c$ we have to consider a triangulation $T$ of $S^d$ and build the chain map $C_*(T; \mathbb F_2) \to C^{d-*}(\Delta; \mathbb F_2)$, defined by counting (modulo 2) intersections of faces of $T$ and faces of $f(\Delta)$ of complementary dimension. The crucial fact, noted in~\cite{grom2010} is that the canonical class $\xi$ evaluates on the image $f^c(S^d)$ to $1$, thus showing that the map $f^c$ is homologically nontrivial. Indeed, the chain map $C_*(T; \mathbb F_2) \to C^{d-*}(\Delta; \mathbb F_2)$ sends the fundamental class $[S^d]$ to the cochain in $C^{0}(\Delta; \mathbb F_2)$ having coefficient $1$ at every vertex of $\Delta$ (since in the general position every vertex of $\Delta$ is mapped by $f$ into precisely one $d$-face of $T$), and we apply Definition~\ref{definition:can-class}.

\subsection{More details: subspace of thin cocycles}

The other step of the proof is to note a different thing, which we are going to explain. We consider the subspace $\mathcal C^d_0\subset \mathcal C^d$, corresponding to those cocycles in $C^d(\Delta; \mathbb F_2)$ whose supports invoke less than the fraction $\frac{2d}{(d+1)!(d+1)}$ of $d$-faces of $\Delta$. We start with a very informal observation that $\mathcal C^d_0$ can be contracted to the zero cycle $0\in \mathcal C^d$ by a certain cocycle filling process. This observation (in~\cite{grom2010}) concludes the proof of Theorem~\ref{theorem:gromov}, because $f^c$ must touch $\mathcal C^d\setminus \mathcal C^d_0$ in order to be homologically nontrivial (as the cohomology class $\xi$ shows).

Now we add more details to the above sketch. In the process of contraction, in fact, there was another assumption: that the cycles of lower dimension $d-k$ in $\Delta$ constituting the cycle $f^c(T)$ (the same as simplices of positive dimension $k$ of $f^c(T)$) must have negligible complexity, that is consist of a tiny $\varepsilon$-fraction of all $(d-k)$-simplices of $\Delta$ for positive $k$. In~\cite{grom2010} this assumption was satisfied by taking sufficiently fine triangulation $T$ of $S^d$, then $f^c(T)$ has this property. So we have to include this assumption directly into the definition of the subspace $\mathcal C^d_0$, to define the subspace $\mathcal C^d_\varepsilon\subset \mathcal C^d$. More precisely, for $\mathcal C^d_\varepsilon$ we require that its faces, considered as chain maps $\sigma : C_*(\Delta^k) \to C^{d-*}(\Delta)$, are such that their $(d-k)$-dimensional parts invoke less that the $\varepsilon$-fraction of all $(d-k)$-faces of $\Delta$ in their support for $k>0$, and the fraction of used $d$-faces of $\Delta$ is at most the magic constant $\frac{2d}{(d+1)!(d+1)}$, possibly up to some correction term of order $1/N$ when $N\to \infty$. In fact, our first naive definition of $\mathcal C^d_0$ expressed this set as a collection of vertices of $\mathcal C^d$, so we really have to use $\mathcal C^d_\varepsilon$ for sufficiently small $\varepsilon$ to make this subspace contractible and the whole argument valid.

So what we need from the subspace $\mathcal C^d_\varepsilon$ is the following:

\begin{property}
\label{property:contraction}
Every $d$-dimensional modulo 2 cycle in $\mathcal C^d_\varepsilon$, expressed as a map of a modulo 2 pseudomanifold $P$ to $\mathcal C^d_\varepsilon$, can be contracted to a single point inside $\mathcal C^d$.
\end{property}

This property is proved in~\cite{grom2010} by extending the map $P\to \mathcal C^d_\varepsilon$ to the cone over $P$ using the ``linear filling profile'' in the complex $C_*(\Delta)$ (remember Property~\ref{property:chain-map}). This linear filling profile (actually having unit norm) allows one to invert the boundary operator $\partial : C_*(\Delta) \to C_*(\Delta)$ economically. This inversion process is iterated over the skeleton of $P$, and finally assures that a chain arising in $C_0(\Delta)$ is actually a cycle because of having a small norm (the size of its support), thus extending the map over a $(d+1)$-face of the cone over $P$.

The next step is to note that, when we take a sufficiently fine triangulation $T$ of $S^d$ in our argument, the image $f^c(T)$ gets into $\mathcal C^d_\varepsilon$, provided its ``vertex part'' is in $\mathcal C^d_0$. Property~\ref{property:contraction} means that the canonical class $\xi$ vanishes when restricted to the subspace $\mathcal C^d_\varepsilon\subset \mathcal C^d$ and therefore (by the exact sequence of the pair) it can be represented by a cocycle $X\in C^d(\mathcal C^d; \mathbb F_2)$ with support outside $\mathcal C^d_\varepsilon$.

So we assume that the canonical class $\xi$ is represented by a suitable cocycle $X$ not touching $\mathcal C^d_\varepsilon$. Then the pullback of this canonical cocycle under the map $f^c : S^d \to  \mathcal C^d$ makes a cocycle $X_f \in C^d(S^d; \mathbb F_2)$. In order for the pullback to be defined on the level of cocycles, we assume a sufficiently fine triangulation $T$ of $S^d$ (to fit into $\mathcal C^d_\varepsilon$ from the previous paragraph) and then $X_f$ becomes a $d$-cocycle assigning $0$ or $1$ to every $d$-face of $T$, and having the following property: The support of $X_f$ in $S^d$ consists of some $d$ faces of $T$, each of which having a ``heavily covered'' point as its vertex. Here a ``heavily covered'' point is a point covered by at least $\frac{2d}{(d+1)!(d+1)}$ of all images of $d$-faces of $\Delta$.

\subsection{Homologically continuous dependence}

Now we observe that $f^c(S^d)$ depends continuously on $f$ in some intuitive sense, and therefore the cocycle $X_f$ ``cut by $X$'' on $f^c(S^d)$ depends continuously on $f$, assuming certain topology on the space of cocycles, like the one implicit in the above construction.

The continuous dependence can be understood more precisely as follows: Let everything depend on a parameter space $P$, which we assume to be a PL-manifold; so $\tilde f : \Delta\times P \to S^d\times P$ is a family of maps over $P$. Consider $S^d\times P$ as having sufficiently fine triangulation. Every $k$-face $F\subseteq \Delta$ makes a $(k\dim P)$-dimensional subset $F_P = \tilde f(F\times P)$ in $S^d\times P$. Then for every vertex $v$ of $S^d\times P$ we count $d$-faces of $\Delta$ whose $F_P$ cover $v$ and obtain a cochain in $C^d(\Delta; \mathbb F_2)$ for every vertex. For every $1$-face $\sigma$ of $S^d\times P$ we count $(d-1)$-faces $F$ of $\Delta$ whose $F_P$ intersect $\sigma$ and obtain the respective cochains in $C^{d-1}(\Delta; \mathbb F_2)$, and so on with $k$-faces of $S^d\times P$ and cochains in $C^{d-k}(\Delta; \mathbb F_2)$. Those cochains are arranged in a proper way and give a chain map from the complex $C_*(S^d\times P; \mathbb F_2)$ to $C^{d-*}(\Delta; \mathbb F_2)$, which by Property~\ref{property:chain-map} can be viewed as a simplicial map from $S^d\times P$ to $\mathcal C^d$, this is just the definition of $\mathcal C^d$ as a simplicial abelian group (see~\cite{grom2010}). Therefore our $f^c$ gets extended to a simplicial map from $S^d\times P$, which is a simplicial version of the notion of ``a family of maps depending on the parameter $p\in P$ continuously''.

In order to understand the pullback $X_f$, we consider $X$ as a $d$-cocycle on $S^d\times P$ and make a fiberwise (along $S^d$) Poincar\'e duality, making a cycle $X'\in C_{\dim P} (S^d\times P; \mathbb F_2)$ such that its projection $X'\to P$ onto the parameter space has degree 1 modulo 2. Recall that we assume $P$ to be a PL-manifold for simplicity. This $X'$ is therefore interpreted as a family of $0$-cycles in $\cyc_0(S^d; \mathbb F_2)$ parameterized by $P$, whose all points represent heavily covered points of $S^d$ under their respective maps $f_p$ for $p\in P$.

\section{Applications}
\label{section:applications}

Having established the ``homologically continuous'' dependence we readily infer the following ``transversal'' and ``dual'' analogues of the results in~\cite{dol1993,zivvre1990} or \cite{kar2008}:

\begin{theorem}
\label{theorem:transversal}
Let $P_0, \ldots, P_m$ be finite point sets in $\mathbb R^d$, then there exists an $m$-dimensional affine plane $L$ such that for any $i=0,\ldots, m$ the following holds: The fraction of those $(d-m+1)$-tuples in $P_i$ whose convex hulls touch $L$ is at least
$$
\frac{2(d-m)}{(d-m+1)!(d-m+1)}.
$$
\end{theorem}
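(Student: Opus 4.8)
The plan is to derive Theorem~\ref{theorem:transversal} from the homologically continuous dependence established in Section~\ref{section:proof}, applied in a family over the Grassmannian of $m$-directions, followed by a mod $2$ intersection computation in a sphere bundle over that Grassmannian. Note that the fraction $\frac{2(d-m)}{(d-m+1)!(d-m+1)}$ is exactly the constant of Theorem~\ref{theorem:gromov} with $d$ replaced by $d-m$. First I would fix a Euclidean structure on $\mathbb R^d$ and let $G=G(m,d)$ denote the Grassmannian of $m$-dimensional linear subspaces $V\subseteq\mathbb R^d$; for $V\in G$ write $\pi_V\colon\mathbb R^d\to V^{\perp}$ for the orthogonal projection. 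The complements $V^{\perp}$ are the fibres of a rank $d-m$ bundle $Q\to G$ (the tautological quotient bundle), with total space $\mathrm{Tot}(Q)$ and bundle projection $\pi$. Every affine $m$-plane is uniquely $L=q+V$ with $V\in G$ and $q\in V^{\perp}$, and the convex hull of a $(d-m+1)$-tuple $S\subseteq P_i$ meets $L$ exactly when $q\in\conv\pi_V(S)$. So it is enough to find $V\in G$ and $q\in V^{\perp}$ that is simultaneously a ``heavily covered point'' --- in the sense of Theorem~\ref{theorem:gromov} in the space $V^{\perp}\cong\mathbb R^{d-m}$ --- of all $m+1$ projected configurations $\pi_V(P_0),\dots,\pi_V(P_m)$.

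Next I would run the machinery of Section~\ref{section:proof} in a family over $G$. For each $i$ the affine maps of the simplex on $P_i$ into $V^{\perp}$ sending vertices to $\pi_V(P_i)$ form a family parameterized by the PL-manifold $G$; carrying out the construction of Section~\ref{section:proof} fibrewise over $G$ --- which is legitimate because $Q$ is locally trivial and the space-of-cocycles construction is natural, or else one works over a cell decomposition of $G$ --- produces for each $i$ a compact modulo $2$ pseudomanifold $X_i'\subseteq\mathrm{Tot}(Q)$ of dimension $\dim G=m(d-m)$ whose projection to $G$ has degree $1$ modulo $2$ and all of whose points $(V,q)$ have $q$ heavily covered for $\pi_V(P_i)$. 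Since heavily covered points are finite, each $X_i'$ stays away from infinity.

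Then I would intersect the $X_i'$. Compactify $Q$ fibrewise to the $S^{d-m}$-bundle $\widehat Q=S(Q\oplus\underline{\mathbb R})\to G$, a closed $\mathbb F_2$-manifold of dimension $(m+1)(d-m)$ with disjoint zero and infinity sections $\sigma_0,\sigma_\infty\cong G$, so that $X_i'\subseteq\mathrm{Tot}(Q)=\widehat Q\setminus\sigma_\infty$. Using the section $\sigma_0$, the Leray--Hirsch theorem gives $H^*(\widehat Q;\mathbb F_2)=H^*(G;\mathbb F_2)\oplus H^*(G;\mathbb F_2)\cdot\theta$ with $\theta=\mathrm{PD}[\sigma_0]$ restricting to a fibre generator and, the normal bundle of $\sigma_0$ being $Q$, $\theta^2=\pi^*w_{d-m}(Q)\smile\theta$. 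Because $X_i'$ meets each fibre in an odd number of points and is disjoint from $\sigma_\infty$, its Poincar\'e dual is $\theta$ (the possible correction term lies in $H^{d-m}(G;\mathbb F_2)$ and equals the restriction $\mathrm{PD}[X_i']|_{\sigma_\infty}=0$). The codimensions of $X_0',\dots,X_m'$ in $\widehat Q$ add up to $(m+1)(d-m)=\dim\widehat Q$, so by mod $2$ intersection theory in the closed manifold $\widehat Q$ the number of points of $X_0'\cap\dots\cap X_m'$, taken modulo $2$, equals
\[
\langle\theta^{\,m+1},[\widehat Q]\rangle=\big\langle\pi^*\big(w_{d-m}(Q)^m\big)\smile\theta,\,[\widehat Q]\big\rangle=\big\langle w_{d-m}(Q)^m,\,[G]\big\rangle,
\]
which is nonzero since $w_{d-m}(Q)^m$ generates $H^{m(d-m)}(G(m,d);\mathbb F_2)$ --- under $G(m,d)\cong G(d-m,d)$ this becomes the $m$-th power of the top Stiefel--Whitney class of the tautological rank $d-m$ bundle, that is, in Schubert calculus, the $m$-th power of a height-$(d-m)$ column, which is the top Schubert class. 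Hence $X_0'\cap\dots\cap X_m'\neq\varnothing$, and a pair $(V,q)$ in it yields the plane $L=q+V$ with $q$ heavily covered for every $\pi_V(P_i)$, which is the assertion of the theorem.

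The main obstacle is the passage, in the second step, from the fixed-target setting of Section~\ref{section:proof} to a fibrewise version over the genuinely twisted bundle $Q\to G$; this is precisely the ``take a Cartesian product with another geometric construction'' robustness that the homologically continuous dependence is advertised to provide, so I expect it to go through with care but without new ideas. The remaining ingredients --- the structure of $H^*(\widehat Q;\mathbb F_2)$, the non-vanishing of $w_{d-m}(Q)^m$, and mod $2$ general position --- are classical.
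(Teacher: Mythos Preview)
Your proposal is correct and follows essentially the same route as the paper: project each $P_i$ onto $V^\perp$, run the parameterized Gromov argument of Section~\ref{section:proof} over the Grassmannian to obtain, for each $i$, a mod~$2$ pseudomanifold of heavily covered points projecting with degree $1$ to $G$, and then force a common intersection by the nonvanishing of a top characteristic class. The only cosmetic difference is packaging: the paper phrases the last step in the Thom space $M(\gamma)$ via the $(m{+}1)$st power of the Thom class, whereas you work in the sphere bundle $\widehat Q=S(Q\oplus\underline{\mathbb R})$ via Leray--Hirsch and Poincar\'e duality, computing $\theta^{m+1}$ in terms of $w_{d-m}(Q)^m$; under the identification $G(m,d)\cong G_{d,d-m}$ these are the same nonvanishing fact cited from \cite{dol1993,zivvre1990}.
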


\begin{theorem}
\label{theorem:dual}
Let $\mathcal H$ be a family of hyperplanes in $\mathbb R^d$ in general position. Then there exists a point $c\in \mathbb R^d$ such that the fraction of those $(d+1)$-tuples of hyperplanes in $\mathcal H$ that surround $c$ is at least $\frac{2d}{(d+1)!(d+1)}$.
\end{theorem}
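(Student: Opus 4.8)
The plan is to deduce Theorem~\ref{theorem:dual} from the heavily covered point theorem of Section~\ref{section:proof} by polarising the arrangement $\mathcal H$ about the very point one is trying to produce, and to use the homologically continuous dependence to convert the resulting fixed-point situation into a topological intersection statement.

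\emph{Step 1: polar reformulation.} For $c\in\mathbb R^d$ off every hyperplane of $\mathcal H$ and $H\in\mathcal H$, let $p_H(c)$ denote the polar dual point of $H$ with respect to the unit sphere centred at $c$: if $H=\{x:\langle u,x-c\rangle=\delta\}$ with $|u|=1$ then $p_H(c)=c+u/\delta$. Lifting $\mathbb R^d$ to the chart $\{x_{d+1}=1\}\subset\mathbb R^{d+1}$ and passing to $S^d$, one checks that $p_H(c)-c$ is a positive multiple of the unit normal of $H$ pointing away from $c$, and hence that a $(d+1)$-tuple $\tau\subseteq\mathcal H$ surrounds $c$, i.e.\ $c$ lies in the bounded simplex cut out by the hyperplanes of $\tau$, if and only if $c\in\int\conv\{p_H(c):H\in\tau\}$. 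Thus the number of tuples of $\mathcal H$ surrounding $c$ equals the number of $(d+1)$-subsets of the point configuration $P(c)=\{p_H(c):H\in\mathcal H\}$ whose convex hull contains $c$, and it suffices to produce a $c$ that is heavily covered --- in the sense of Theorem~\ref{theorem:gromov} --- by its own dual configuration $P(c)$.

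\emph{Step 2: from homological continuity to intersection.} The assignment $c\mapsto P(c)$ is continuous on $\mathbb R^d\setminus\bigcup\mathcal H$ and remains continuous on all of $\mathbb R^d$ once dual points are allowed to pass through $\infty\in S^d$. Feeding the corresponding family of straight-line maps $\Delta^{|\mathcal H|-1}\to S^d$ into the construction of Section~\ref{section:proof}, with parameter space (a suitable compactification of) $\mathbb R^d$, produces a codimension~$d$ modulo~$2$ pseudomanifold $Z$ in the total space of the family, whose fibre over $c$ is a set of points heavily covered by $P(c)$ and whose projection to the parameter space is proper of degree~$1$ modulo~$2$. A point of $Z$ lying on the diagonal $\{(c,c)\}$, with $c$ off all hyperplanes, is then surrounded by at least the asserted fraction $\tfrac{2d}{(d+1)!(d+1)}$ of the $\binom{|\mathcal H|}{d+1}$ tuples, the $O(1/|\mathcal H|)$ correction being absorbed exactly as in Theorem~\ref{theorem:gromov}. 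So the statement reduces to showing that $Z$ must meet the diagonal, i.e.\ that a certain modulo~$2$ intersection number is $1$.

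\emph{Step 3, and the main obstacle.} This intersection count is where the real work lies, because the dual configuration $P(c)$ degenerates at the boundary of parameter space: as $c$ tends to a hyperplane $H\in\mathcal H$ the point $p_H(c)$ escapes to $\infty\in S^d$, and as $|c|\to\infty$ the configuration $P(c)$ collapses onto the point $c$. One therefore has to pick the compactification so that these degenerations do not create spurious ``solutions,'' check that $Z$ extends to a genuine cycle there, and then evaluate the intersection with the diagonal --- equivalently, pull back the ``contains a prescribed point'' class from the space of $0$-cycles in $S^d$ and verify that it pulls back nontrivially --- using that $Z$ has degree $1$ over the parameter space; an equivariant-topology reformulation on the double cover $S^d\to\mathbb{RP}^d$ may be the cleanest route. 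Granting this, Theorem~\ref{theorem:dual} follows, and Theorem~\ref{theorem:transversal} is handled by the same scheme with projection along a variable $m$-flat in place of the polar duality and a characteristic-class computation over the manifold of $m$-flats in place of the diagonal.
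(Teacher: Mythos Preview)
Your overall architecture matches the paper's: dualize each hyperplane to a point depending on the candidate $c$, invoke the homologically continuous heavily-covered-point cycle, and finish with a Brouwer-type diagonal intersection. The difference is in the dualizing map. You use polar duality $p_H(c)=c+u/\delta$, which blows up on every hyperplane of $\mathcal H$ and at infinity; you correctly flag this in Step~3 as ``the main obstacle'' and then do not resolve it, so the proposal as written has a genuine gap.

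The paper sidesteps exactly this difficulty by replacing polar duality with orthogonal projection: it fixes a convex body $B$ large enough that each $\pi_H$ maps $B$ into itself, and for $x\in B$ considers $P_x=\{\pi_H(x)\}_{H\in\mathcal H}$. The same correspondence you checked for $p_H$ holds here too, since $\pi_H(x)-x$ is again a positive multiple of the unit normal from $x$ toward $H$; so a $(d+1)$-tuple surrounds $x$ iff the simplex on the corresponding feet of perpendiculars contains $x$. Because $\pi_H$ is globally defined and bounded, the heavily-covered cycle $c(x)$ lives in $B$ for every $x\in B$, the graph $\Gamma_c\subset B\times B$ is a modulo~$2$ pseudomanifold with boundary mapping to boundary and projecting to the first factor with degree~$1$, and the intersection with the diagonal is read off directly from the relative homology of $(B,\partial B)\times(B,\partial B)$. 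In other words, the paper's choice of auxiliary map makes your Step~3 a one-line Brouwer argument rather than an open problem about compactifications. If you want to salvage your version, the cleanest fix is simply to swap $p_H$ for $\pi_H$; the rest of your outline then goes through unchanged.
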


In the last theorem a family of $d+1$ hyperplanes \emph{surround} $c$ if $c$ cannot be moved to $\infty$ without touching any of these hyperplanes.

\begin{remark}
The proof with a worse constant $\frac{1}{(d+1)^d}$ in~\cite{ba1982} uses the Tverberg theorem to find the point $c$. Since there are topological analogues of the Tverberg theorem that have a similar homological continuity modulo a prime $p$, Theorems~\ref{theorem:transversal} and \ref{theorem:dual} with a weaker constant follow immediately. In fact, the constant will be further spoiled because of the requirement that the number of parts in a Tverberg partition is a prime power $p^\alpha$, and by the limitation to even $d-m$ in Theorem~\ref{theorem:transversal} for odd primes $p$, like in~\cite{kar2007}. In other words, the versions of  Theorems~\ref{theorem:transversal} and \ref{theorem:dual} with worse constant directly follow from the corresponding versions of the Tverberg theorem in~\cite{kar2007,kar2008} using the argument from~\cite{ba1982}.
\end{remark}

\begin{proof}[Proof of Theorem~\ref{theorem:transversal}]
Choose arbitrary $m$-dimensional linear subspace $V$ of $\mathbb R^d$. Then we apply Theorem~\ref{theorem:gromov} (for linear maps $f_i$, mapping the corresponding simplices linearly following mappings of the vertices to the projected $P_i$'s) to the projections of $P_i$'s onto the orthogonal complement $V^\perp$. We observe the corresponding cycles $c_i(V)$ of heavily covered points. What we need is to find a nonempty intersection of the supports of $c_i(V)$ for some $V$.

Now observe that all possible choices of $V$ and $V^\perp$ constitute the Grassmannian $G_{d, d-m}$, and the family of all possible $V^\perp$ gives the canonical vector bundle $\gamma = \gamma_{d, d-m}$ over $G_{d, d-m}$. It is known (see~\cite{dol1993,zivvre1990}) that the $m$-th power of the Euler class of $\gamma$ is nonzero modulo $2$. Equivalently, the $(m+1)$th power of the Thom class $\tau(\gamma)$ is nonzero modulo $2$ in the cohomology of the Thom space $M(\gamma)$ (the compactified total space of $\gamma$). This makes any $m+1$ sections of $\gamma$ coincide over some $V\in G_{d, d-m}$.

But this also makes any $m+1$ ``homological sections'' coincide somewhere in $\gamma$. Indeed, the homological sections $c_i(V)$ are modulo 2 pseudomanifolds that project with degree 1 mod 2 onto the base. Hence they are all Poincar\'e dual to the Thom class in the Thom space, and hence their intersection must be nontrivial. This gives the desired $V$ and intersections $c_i$'s, thus completing the proof.

In fact, the last part can be done in less geometric terms: Note that $c_i(V)$'s were defined in Section~\ref{section:proof} as $(d-m)$-cocycles on the compactification of $V^\perp$. And the parameterized version of Gromov's argument means that with varying $V$ those cocycles constitute a single $(d-m)$-cocycle of the Thom space $M(\gamma)$, representing the Thom class in the cohomology of $M(\gamma)$. Since the $(m+1)$th power of the Thom class does not vanish, their supports must have a common point.
\end{proof}

\begin{proof}[Proof of Theorem~\ref{theorem:dual}]
We act like in~\cite{kar2008} (see its corrected \href{http://arxiv.org/abs/0909.4915}{arxiv.org version}): Take a convex body $B$ so that every orthogonal projection $\pi_H$ onto a hyperplane $H\in\mathcal H$ takes $B$ into itself. Then for every point $x\in B$ we consider the point set $P_x = \{\pi_H(x)\}_{\mathcal H}$. We can find a point $c$ such that at least $\frac{2d}{(d+1)!(d+1)}$ of all $d$-simplices with vertices in $P_x$ contain $c$, moreover, by the ``homological continuous'' dependence we can choose a $0$-dimensional cycle $c(x)$ of such points depending continuously on $x$. In other words, this cycle is a multivalued map whose graph $\Gamma_c\subset B\times B$ is a modulo $2$ pseudomanifold with boundary projecting onto the first summand $B$ with degree $1$ modulo $2$ and taking boundary to boundary.

Then we follow the usual proof of the Brouwer fixed point theorem by showing that $\Gamma_c$ intersects the diagonal of $B\times B$, for example, by writing down the classes of $\Gamma_c$ and the diagonal $\Delta_B\subset B\times B$ in the relative $d$-dimensional homology of $(B,\partial B)\times (B, \partial B)$. We deduce that for some $x\in B$ some point in the support of $c(x)$ coincides with $x$, then, using the general position assumption, we conclude that those $d$-simplices of points $\pi_H(x)$ that contain $x$ correspond to those $(d+1)$-tuples of hyperplanes in $\mathcal H$ that surround $x$.
\end{proof}

\begin{remark}
Using an appropriate modification of Gromov's technique, like in~\cite{kar2011} with the improvement of the constant in~\cite{jiang2014}, one can prove the ``colorful'' version of the above theorems. That is, in Theorem~\ref{theorem:transversal} $P_i$ can be given colored into $d-m+1$ colors each so that every color covers exactly the fraction $\frac{1}{d-m+1}$ of its corresponding set $P_i$, and in the conclusion $L$ will touch at least the fraction $\frac{2(d-m)}{(d-m+1)!(d-m+1)}$ of all ``rainbow simplices'' (having no repetition of colors) of $P_i$.

Similarly, in Theorem~\ref{theorem:dual} $\mathcal H$ may be given colored in $d+1$ colors uniformly and in the conclusion the fraction of $(d+1)$-tuples of hyperplanes surrounding $c$ in all possible ``rainbow $(d+1)$-tuples'' is shown to be at least $\frac{2d}{(d+1)!(d+1)}$. The last claim may be considered as one step of another approach to the result of~\cite{barpa2013}.
\end{remark}

\begin{remark}
The improvements of the constant $\frac{2d}{(d+1)!(d+1)}$ from~\cite{matwag2011,kms2012} are applicable in the above theorems as well, since they actually improve the ``filling process'' used to contract $d$-cycles in what we call $\mathcal C^d_\varepsilon$.
\end{remark}

\section{Elementary observations about the planar case}
\label{section:elementary}

It is known that for $d=2$ Theorem~\ref{theorem:gromov} has very elementary proofs, see~\cite{bukh2006} or \cite{fglnp2012}, for example. Here we are going to mimick the one in~\cite{fglnp2012} to prove the dual version:

\begin{proof}[Elementary proof of the planar case of Theorem~\ref{theorem:dual}]
Let us have a set of lines in $\mathbb R^2$. When we are going to speak about a random line, or a random pair of lines, or a random triple of lines, we will choose the lines from the given set uniformly.

Now we consider pairs of a halfplane $H$ and a point $q\in \partial H$. We will also parameterize the halfplane $H$ by its inner unit normal $p$, considered as a vector at $q$. Such a pair $(q, H)$ is called \emph{exposed} if the probability that a pair of lines $\ell_1$ and $\ell_2$ cuts from $H$ a triangle having $q$ on its base is less than $2/9$.

We will usually translate the question about lines to questions about points, having fixed the point $q$, by replacing a line $\ell$ with the projection of $q$ onto $\ell$, let $q(\ell) = \pi_\ell(q)$. In terms of points $q_1 = q(\ell_1)$ and $q_2=q(\ell_2)$ the condition of \emph{exposed} is expressed as follows: The probability that the random segment $q_1q_2$ intersects the ray $\{q + tp\}_{t\ge 0}$ is less than $2/9$; this is almost the same of the definition of ``exposed'' in~\cite{fglnp2012}.

Now we want to find a point $q$ such that no pair $(q, p)$ is exposed, for any $p$. This point will be the required point in the theorem, since for any fixed choice of the first line $\ell_1$ in the triple, the event ``the triple $\{\ell_1, \ell_2, \ell_3\}$ surrounds $q$'' is equivalent to the event ``the pair $\{\ell_2, \ell_3\}$ cuts from $H_{\ell_1}$ a triangle having $q$ on its base''. Here $H_{\ell_1}$ is the halfplane with $\partial H_{\ell_1}$ parallel to $\ell_1$, containing $q$, and disjoint from $\ell_1$. If none of $(q, H_{\ell_i})$ is exposed then we sum the probabilities to have the probability of ``the triple $\{\ell_1, \ell_2, \ell_3\}$ surrounds $q$'' to be at least $2/9$.

So we assume that no such $q$ exists. Now we also follow the proof in~\cite{fglnp2012}. When two pairs $(q, p_1)$ and $(q, p_2)$ with the same $q$ are exposed, then we consider the rays $r_1 = \{q + tp_1\}_{t\ge 0}$ and $r_2 = \{q + tp_2\}_{t\ge 0}$. The rays divide the plane into two parts $P_1$ and $P_2$ and one of the parts must have $< 1/3$ of the points $q(\ell)$. Otherwise the probability that the two points $\{q(\ell_1), q(\ell_2)\}$ are in different $P_i$'s will be at least $4/9$, and for one of the rays $r_i$, the probability that the segment $[q(\ell_1), q(\ell_2)]$ intersects $r_i$ is at least $2/9$, which contradicts the definition of ``exposed''. So we conclude that one of $P_i$'s contains less than $1/3$ of the points $q(\ell)$. For any $p$, whose ray $r = \{q + tp\}_{t\ge 0}$ is in this $P_i$, we call the pair $(q, p)$ \emph{almost exposed}.

As in~\cite{fglnp2012}, it is easy to conclude that for any $q\in\mathbb R^2$ the set $F_q$ of $p$ such that the pair $(q, p)$ is almost exposed is a nonempty proper connected subset of the circle $S^1$. It is nonempty because of our assumption; and it is proper because for an exposed pair $(q,p_1)$ we can always find another non-exposed pair $(q, p_2)$. It is achieved, when the rays $r_1 = \{q + tp_1\}_{t\ge 0}$ and $r_2 = \{q + tp_2\}_{t\ge 0}$ partition the set $\{q(\ell)\}$ into equal parts, this argument is also from~\cite{fglnp2012}.

Now the proof is finished by applying an appropriate version of the Brouwer fixed point theorem for convex-valued maps with closed graphs to the map $q\mapsto S^1\setminus F_q$, as in~\cite{fglnp2012}.
\end{proof}

Now we give an example showing that the constant $2/9$ is the best possible in the planar case of Theorem~\ref{theorem:dual}. Consider a circle $C$ and choose a sequence of points $x_1,\ldots, x_n$ in the given order on an arc of $C$ of angular measure at most $\pi/2$. Let $\ell_1,\ldots,\ell_n$ be the lines tangent to $C$ at their respective points $x_i$'s.

How a point $q$ can be surrounded by some three of $\ell_i$'s? It is easy to observe that $q$ must lie outside $C$ and the three surrounding lines must have three types: One line $\ell_{i_2}$ must separate $q$ from $C$, another line $\ell_{i_1}$ must have $q$ and $C$ on the same side of it and $q$ must be to the ``right'' of $C$ (if we place the arc $[x_1x_n]$ approximately horizontally), and the third line $\ell_{i_3}$ must also have $q$ and $C$ on the same side of it and $q$ must be to the ``left'' of $C$.

In fact, for any given $q$ outside $C$ all the lines are separated into such three classes of cardinalities $n_1,n_2,n_3$ so that $n_1+n_2+n_3 = n$. So the number of triangles surrounding $q$ is always bounded by
$$
n_1n_2n_3 \le \frac{(n_1+n_2+n_3)^3}{27} = \frac{n^3}{27},
$$
which approaches $\frac{2}{9}\binom{n}{3}$ for large $n$.

\bibliography{../bib/karasev}
\bibliographystyle{abbrv}
\end{document}